\newtheorem{prop}{Proposition}[section]
\newtheorem{thm}[prop]{Theorem}
\newtheorem{lemma}[prop]{Lemma}
\newtheorem{ex}[prop]{Example}
\newtheorem{defn}[prop]{Definition}
\newtheorem{rem}[prop]{Remark}
\begin{document}
\title{Elasticity in Polynomial-Type Extensions}
\author{Mark Batell}
\address{Department of Mathematics\\
	North Dakota State University\\
	Fargo, ND 58108}
\email{mark.batell@ndsu.edu}
\keywords{Factorization, unique factorization}
\subjclass[2000]{Primary: 13F15}
\author{Jim Coykendall}
\address{Department of Mathematics\\
	North Dakota State University\\
	Fargo, ND 58108}
\email{jim.coykendall@ndsu.edu}

\begin{abstract}
The elasticity of an atomic integral domain is, in some sense, a measure of how far the domain is from being a unique factorization domain (or, more properly, a half-factorial domain). We consider the relationship between the elasticity of a domain, $R$, and the elasticity of its polynomial ring $R[x]$. For example, if $R$ has at least one atom, a sufficient condition for the polynomial ring $R[x]$ to have elasticity $1$ is that every nonconstant irreducible polynomial $f \in R[x]$ be irreducible in $K[x]$. We will determine the integral domains $R$ whose polynomial rings satisfy this condition.

\end{abstract}
\maketitle

\section{Introduction and Motivation}

In this paper, $R$ will always be an integral domain with quotient field $K$. The notation $\text{Irr}(R)$ will stand for the irreducible elements of $R$, $\text{A}(R)$ will be the elements of $R$ that can be expressed as a product of atoms, $U(R)$ and $Cl(R)$ will respectively denote the unit group and the class group of $R$. For a nonzero nonunit element $x\in\text{A}(R)$, we define the elasticity of $x$ to be 

\[
\rho(x)=\text{sup}\{\frac{n}{m}\vert x=\pi_1\pi_2\cdots\pi_n=\xi_1\xi_2\cdots\xi_m\}
\]

\noindent where each $\pi_i,\xi_j$ is an irreducible element of $R$. For example, if $x$ is a product of primes or is an element of a half-factorial domain, then $\rho(x)=1$. If $x\notin\text{A}(R)$ then $\rho(x)$ is undefined.

For an integral domain $R$, the elasticity is defined by

\[
\rho(R)=\text{sup}\{\rho(x)\vert x\in\text{A}(R)\}.
\]

As previously, we say that the elasticity of a domain without any atoms is undefined. It is well-known that if $R$ is atomic, then $\rho(R)=1$ if and only if $R$ is an HFD, but in the nonatomic case, the situation can be more exotic. For example in \cite{CZ} a domain was constructed with a unique (up to associates) irreducible element. Such a domain, $R$, is necessarily nonatomic, but $\rho(R)=1$.

More generally in [CM] it is shown that any atomic monoid can be realized as the ``atomic part" of an integral domain (again, usually non-atomic). Hence, one can construct nonatomic domains that display any prescribed elasticity.

Given an integral domain, $R$, it is natural to ask what is the relationship between $\rho(R)$ and $\rho(R[x])$. Since any factorization of a constant in $R[x]$ must be a factorization in $R$, it is easy to see that, in general, $\rho(R[x])\geq \rho(R)$. More generally, one can ask if we have the sequence of integral domains

\[
R=R_0\subseteq R_1\subseteq R_2\subseteq\cdots\subseteq K
\]

\noindent what is the relationship between $\rho(R_0+R_1x+R_2x^2+\cdots)$ and the collection of data $\rho(R_i)$? Some special cases of this general construction worth noting are the polynomial ring ($R_i=R$ for all $i\geq 0$), the construction $R+xK[x]$ ($R_0=R$ and $R_i=K$ for all $i\geq 1$), and $R+Rx+x^2K[x]$ ($R_0=R_1=R$ and $R_i=K$ for all $i\geq 2$).

\section{Some Polynomial-Type Constructions}

We begin this section with some preliminary lemmata, but first we recall the notion of an AP-domain.

\begin{defn}
We say that the integral domain, $R$, is an AP-domain if every irreducible (atom) in $R$ is prime.
\end{defn}

\begin{lemma}
\label{AP}
Let $R$ be an AP-domain with at least one irreducible element, then $\rho(R)=1$.
\end{lemma}

\begin{proof}
Of course, if $R$ is an AP-domain vacuously (that is, in the case that $R$ has no atoms), then $\rho(R)$ is undefined. Suppose, on the other hand, that $\text{Irr}(R)$ is nonempty. Since all atoms in an AP-domain, are prime, any irreducible factorization is a prime factorization and therefore is unique. Hence $\rho(x)=1$ for all $x\in\text{A}(R)$ and so $\rho(R)=1$. 
\end{proof}

\begin{lemma}
\label{shrink}
Let $R$ be a domain, $p\in R$ be a nonzero prime element, and $a\in\text{A}(R)$. Then $\rho(a)\geq \rho(ap)$.
\end{lemma}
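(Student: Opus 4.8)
The plan is to bound $\rho(ap)$ from above by $\rho(a)$ by taking an arbitrary competing pair of irreducible factorizations of $ap$ and using the primality of $p$ to strip off one copy of $p$ from each, thereby producing a competing pair of factorizations of $a$. First I would observe that both quantities are defined: since $a\in\text{A}(R)$ and $p$ is prime (hence an atom), the product $ap$ is again a product of atoms, so $ap\in\text{A}(R)$, and moreover both $a$ and $ap$ are nonzero nonunits.

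Next comes the key step. Suppose $ap=\pi_1\cdots\pi_n=\xi_1\cdots\xi_m$ is any pair of factorizations of $ap$ into irreducibles. Because $p$ is prime and $p\mid\pi_1\cdots\pi_n$, it divides some $\pi_i$; as $\pi_i$ is irreducible, $\pi_i$ must be an associate of $p$, so after cancelling $p$ (valid in a domain) the remaining factors yield a factorization $a=u\prod_{j\neq i}\pi_j$ into $n-1$ atoms. Since $a$ is a nonzero nonunit we must have $n\geq 2$, so this is a genuine factorization; the same argument applied to $\xi_1\cdots\xi_m$ produces a factorization of $a$ into $m-1$ atoms with $m\geq 2$. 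Hence $\frac{n-1}{m-1}\leq\rho(a)$.

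The main obstacle --- really the only subtlety --- is that I have a bound on $\frac{n-1}{m-1}$ but want one on $\frac{n}{m}$. This is settled by an elementary comparison: when $n\geq m$ one checks that $\frac{n}{m}\leq\frac{n-1}{m-1}$ (the inequality $n(m-1)\leq m(n-1)$ reduces to $m\leq n$), so $\frac{n}{m}\leq\rho(a)$; and when $n<m$ we simply have $\frac{n}{m}<1\leq\rho(a)$, using that elasticity is always at least $1$. In either case $\frac{n}{m}\leq\rho(a)$. Taking the supremum over all competing pairs of factorizations of $ap$ then gives $\rho(ap)\leq\rho(a)$, which is the assertion.
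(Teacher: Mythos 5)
Your proposal is correct and follows essentially the same route as the paper: strip the prime $p$ off an arbitrary irreducible factorization of $ap$ (primality forces $p$ to be associate to one of the factors), obtaining a factorization of $a$ one shorter, and then conclude via the elementary comparison $\frac{n}{m}\leq\frac{n-1}{m-1}$ for $n\geq m$. Your treatment is somewhat more careful than the paper's on the edge cases (verifying $ap\in\text{A}(R)$, that $n\geq 2$, and the case $n<m$ via $\rho(a)\geq 1$), but the underlying argument is identical.
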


\begin{proof}
Suppose we have the following irreducible factorization of $ap$:

\[
ap=\xi_1\xi_2\cdots\xi_n,
\]

\noindent where each $\xi_i\in\text{Irr}(R)$ for all $1\leq i\leq n$. Since $p\in R$ is prime, $p$ must be associated with one of the $\xi_i$; we will say, without loss of generality, that $\xi_n=up$ for some $u\in U(R)$. Since $R$ is an integral domain, we cancel the factor of $p$ to obtain

\[
a=\xi_1\xi_2\cdots\xi_{n-1}u.
\]

The upshot is that an arbitrary irreducible factorization of $ap$ is (up to associates) $p$ times an irreducible factorization of $a$. Hence there is a factorization of $ap$ of length $m+1$, if and only if there is a corresponding factorization of $a$ of length $m$. Since $m\geq k\geq 1$ implies that $\frac{m}{k}\geq\frac{m+1}{k+1}$, we have that $\rho(a)\geq\rho(ap)$. 
 
\end{proof}

\begin{rem}
To tie up a loose end, we note that the inequality in the previous result can be strict. For example, in the ring $\mathbb{Z}[\sqrt{-14}]$ the element 81 has precisely two irreducible factorizations (up to associates and reordering):

\[
81=(3)(3)(3)(3)=(5+2\sqrt{-14})(5-2\sqrt{-14}).
\]

So $\rho(81)=2$. Now consider the element 81 as an element of $\mathbb{Z}[\sqrt{-14}][x]$. As before 81 has only two irreducible factorizations (the ones mentioned previously) and $x$ is a prime element. An easy check shows that $\rho(81x)=\frac{5}{3}<2$.

\end{rem}

We now present the following theorem.

\begin{thm}
Let $R$ be an integral domain with quotient field $K$. If $R$ contains at least one atom, then $\rho(R)=\rho(R+xK[x])$. If $R$ has no atoms (that is, $R$ is an antimatter domain) then $\rho(R)$ is undefined and $\rho(R+xK[x])=1$.
\end{thm}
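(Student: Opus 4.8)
The plan is to reduce everything to an analysis of the atoms of $S := R + xK[x]$ and then exploit Lemma \ref{shrink}. I would first record the easy structural facts: a constant $r \in R$ has only constant factorizations in $S$ (the degrees of the factors must sum to $0$), so $U(S) = U(R)$, and $r$ is an atom of $S$ precisely when it is an atom of $R$. Consequently, for a constant $a \in \text{A}(R)$ the factorizations of $a$ in $S$ coincide with those in $R$, so $\rho_S(a) = \rho_R(a)$; taking the supremum over such $a$ already yields $\rho(S) \ge \rho(R)$ whenever $R$ has an atom.

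The heart of the argument is classifying the nonconstant atoms and showing they are all prime. I would argue that a nonconstant $f \in S$ whose constant term $f(0)$ is a nonzero nonunit factors as $f(0)\cdot(f/f(0))$, while one with $f(0)=0$ factors as $c\cdot(f/c)$ for any nonzero nonunit $c\in R$ (which exists because $R$, having an atom, is not a field); both are genuine nontrivial factorizations in $S$, since dividing by $f(0)$ or by $c$ leaves the constant term in $R$. Hence a nonconstant atom must satisfy $f(0)\in U(R)$, and after scaling by a unit we may assume $f(0)=1$. For such $f$ I would show that $f$ is irreducible in $S$ iff it is irreducible in $K[x]$ (lifting a $K[x]$-factorization to $S$ by renormalizing the two constant terms to multiply to $1$), and then that it is prime in $S$: if $f\mid gh$ in $S$ then $f\mid g$ in $K[x]$, say $g=fq$ with $q\in K[x]$, and $q(0)=g(0)/f(0)=g(0)\in R$ forces $q\in S$. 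The main obstacle is keeping the bookkeeping of constant terms correct throughout this classification; once that is handled, the primality of every nonconstant atom comes out essentially for free, and this is the conceptual crux of the whole theorem.

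With primality in hand, the inequality $\rho(S)\le\rho(R)$ follows cleanly. Given any $\phi\in\text{A}(S)$, I would fix one atom factorization and separate it into constant atoms $a_1,\dots,a_s$ (atoms of $R$) and nonconstant atoms $p_1,\dots,p_t$ (primes), so $\phi=a\cdot p_1\cdots p_t$ with $a=a_1\cdots a_s$. Applying Lemma \ref{shrink} once for each prime $p_j$ gives the chain $\rho(a)\ge\rho(ap_1)\ge\cdots\ge\rho(\phi)$, so $\rho(\phi)\le\rho(a)=\rho_R(a)\le\rho(R)$, since $a$ is a constant lying in $\text{A}(R)$. If $s=0$, then $\phi$ is a product of primes, hence has unique factorization and $\rho(\phi)=1\le\rho(R)$. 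Taking the supremum over $\phi$ gives $\rho(S)\le\rho(R)$, and combined with the reverse inequality we obtain $\rho(R)=\rho(R+xK[x])$.

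Finally, for the antimatter case I would note that $R$ has no atoms, hence $S$ has no constant atoms. If $R$ is a field then $S=K[x]$ is a UFD and $\rho(S)=1$ directly; otherwise the classification above applies and shows that every atom of $S$ is nonconstant and therefore prime, so $S$ is an AP-domain. Since $1+x$ is always an atom of $S$, Lemma \ref{AP} then gives $\rho(S)=1$, completing the proof.
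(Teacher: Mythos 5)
Your proposal is correct and follows essentially the same route as the paper: classify the nonconstant atoms of $R+xK[x]$ as associates of constant-term-$1$ polynomials that are irreducible in $K[x]$, show these are prime, combine Lemma \ref{shrink} with the fact that constants factor only into constants to get both inequalities, and invoke Lemma \ref{AP} in the antimatter case. If anything, your handling of polynomials with zero constant term is more careful than the paper's, which asserts that $x$ is irreducible and prime in $R+xK[x]$ --- a claim that fails when $R$ is not a field (in $\mathbb{Z}+x\mathbb{Q}[x]$ one has $x=2\cdot\frac{x}{2}$, so $x$ is not even an atom) --- whereas you correctly show such polynomials are never atoms in that case, which is why the gap is harmless.
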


\begin{proof}
Let $g(x)$ be a nonconstant polynomial in $R+xK[x]$. We claim that if $g(x)$ is irreducible, then $g(x)$ is (up to associates) either $x$ or of the form $1+xf(x)$ where $1+xf(x)\in\text{Irr}(K[x])$. To see this, note that if $g(x)$ is nonconstant, then $g(x)=r+xk(x)$ with $k(x)\in K[x]\setminus\{0\}$. If $r=0$ then the stipulation that $g(x)$ is irreducible forces the condition $k(x)\in U(R)$. On the other hand, if $r\neq 0$, then the factorization $g(x)=r(1+\frac{1}{r}xk(x))$ shows that if $g(x)$ is irreducible, then $r\in U(R)$ and $1+\frac{1}{r}xk(x)\in\text{Irr}(K[x])$. This establishes the claim.

We also note that the elements $x$ and $1+xf(x)\in\text{Irr}(K[x])$ are, in fact, prime elements of $R+xK[x]$. The fact that $x$ is prime is straightforward. For an irreducible of the form $1+xf(x)$, note that if $1+xf(x)$ divides the product $h(x)k(x)$ (with $h(x), k(x)\in R+xK[x]$) then without loss of generality, $1+xf(x)$ divides $h(x)$ in $K[x]$. We say that $h(x)=(1+xf(x))q(x)$, and comparing constant terms, we see that $q(x)\in R+xK[x]$. Hence $x$ and irreducibles of the form $1+xf(x)$ are prime in $R+xK[x]$.

From the previous observations, we see that if $R$ is an antimatter domain, then $R+xK[x]$ is an AP-domain (with $\text{Irr}(R+xK[x])$ nonempty) and hence $\rho(R+xK[x])=1$ by Lemma \ref{AP}. 

Now suppose that $R$ has at least one irreducible element. Since any element of $R$, factored as an element of $R[x]$, has only factors from $R$ (and any irreducible in $R$ remains irreducible in $R[x]$), we have that $\rho(R+xK[x])\geq\rho(R)$. On the other hand, let $f(x)\in\text{A}(R+xK[x])$. We factor $f(x)$ into irreducibles as follows:

\[
f(x)=\pi_1\pi_2\cdots\pi_m g_1(x)g_2(x)\cdots g_k(x)
\]

\noindent with $\pi_i\in\text{Irr}(R)$ and $g_i(x)\in\text{Irr}(R+xK[x])$ of degree at least 1. By our previous remarks, each $g_i(x)$ is prime. Hence by Lemma \ref{shrink}, $\rho(f(x))\leq\rho(\pi_1\pi_2\cdots\pi_m)$. Hence $\rho(R+xK[x])\leq\rho(R)$ and so, we have equality.

\end{proof}

In stark contrast, the next result shows that a minor tweaking of the previous construction can yield a domain with infinite elasticity. This also gives a strong indication of the level of difficulty of determining the elasticity of $R_0+R_1x+R_2x^2+\cdots$ in terms of the elasticities $\rho(R_i)$.

\begin{prop}
Let $R$ be a domain that contains at least one atom, then 

\[
\rho(R+Rx+x^2K[x])=\infty.
\]
\end{prop}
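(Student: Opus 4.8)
The plan is to exhibit, for each positive integer $k$, a single element $y_k$ of the ring $S := R + Rx + x^2K[x]$ that admits two irreducible factorizations whose lengths spread apart as $k$ grows; since $\rho(S)$ is the supremum of $\rho(y)$ over all $y \in \text{A}(S)$, producing elements of arbitrarily large elasticity forces $\rho(S) = \infty$. I would fix an atom $\pi \in R$ (available by hypothesis) and set $y_k = x^2 + \pi^k x$, which lies in $S$ because its constant and linear coefficients, $0$ and $\pi^k$, both lie in $R$.

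The element $y_k$ carries two factorizations that I would write down explicitly. The \emph{short} one is $y_k = x\,(x + \pi^k)$, a product of two elements of $S$. The \emph{long} one uses the observation that $f_k := x + \pi^{-k}x^2$ lies in $S$, since its linear coefficient is $1 \in R$ and its quadratic coefficient is $\pi^{-k} \in K$; then $y_k = \pi^k f_k = \pi \cdots \pi \cdot f_k$ with $k$ factors of $\pi$. Granting that each displayed factor is irreducible, $y_k$ then has an irreducible factorization of length $2$ and one of length $k+1$, so that $\rho(y_k) \ge \frac{k+1}{2}$, and letting $k \to \infty$ yields $\rho(S) = \infty$.

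The crux is verifying the irreducibility claims, and I expect $f_k$ to be the delicate one. For $\pi$ this is immediate, since a factorization of a constant in $S$ is a factorization in $R$. For the degree-one elements $x$ and $x + \pi^k$, any nontrivial factorization must pair a degree-zero factor $a \in R$ with a degree-one factor, and matching the (unit) leading coefficient forces $a \in U(R)$; hence both are atoms. For $f_k$ the key point is that a factorization into two degree-one elements of $S$ would take the form $(a + bx)(c + dx)$ with $a,b,c,d \in R$, whose coefficient of $x^2$ is $bd \in R$; but the coefficient of $x^2$ in $f_k$ is $\pi^{-k} \notin R$, a contradiction, while a factorization as (constant)$\cdot$(degree two) is ruled out because the linear coefficient $1$ again forces the constant factor to be a unit. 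Once $f_k$ is seen to be irreducible, assembling the two factorizations gives $\rho(y_k) \ge \frac{k+1}{2}$ and completes the argument; the only care required is a clean case-analysis on the possible degree splittings in these verifications.
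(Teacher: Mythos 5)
Your proof is correct and takes essentially the same approach as the paper: where the paper uses $(\pi^n+x)(\pi^n-x)=\pi^{2n}\bigl(1-\frac{1}{\pi^{2n}}x^2\bigr)$, you use $x(x+\pi^k)=\pi^k(x+\pi^{-k}x^2)$, and in both cases a length-two product of linear atoms is rebalanced by pulling a high power of $\pi$ into constant factors, leaving a quadratic atom whose $x^2$-coefficient lies in $K\setminus R$. Your explicit degree-counting verifications of irreducibility (which the paper leaves implicit) are sound, so the argument stands as written.
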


\begin{proof}
Let $\pi\in\text{Irr}(R)$. For all $n\in\mathbb{N}_0$ the polynomial $(\pi^n\pm x)\in\text{Irr}(R+Rx+x^2K[x])$. The irreducible factorizations

\[
(\pi^n+x)(\pi^n-x)=\pi^{2n}(1-\frac{1}{\pi^{2n}}x^2)
\]

\noindent have lengths $2$ and $2n+1$ respectively. Hence we see that $\rho(R+Rx+x^2K[x])=\infty$.
\end{proof}

We now specialize to the case $R[x]$. In comparing the elasticities $\rho(R)$ and $\rho(R[x])$, there are two dynamics to consider. The first is the factorization of constants (which is reflected in $\rho(R)$) and the different factorizations that may result from the polynomial structure. To illustrate we consider the following examples.

\begin{ex}
It is well-known (see for example \cite{C1}) that $\mathbb{Z}[\sqrt{-3}]$ is a half-factorial domain (and hence has elasticity 1). The domain $\mathbb{Z}[\sqrt{-3}][x]$ is not an HFD. The irreducible factorizations

\[
(2x+(1+\sqrt{-3}))(2x+(1-\sqrt{-3}))=(2)(2)(x^2+x+1)
\]

\noindent demonstrates that the elasticity of the polynomial extension exceeds 1. 
\end{ex}

A close look at the mechanics of the previous example reveals that the failure of the domain $\mathbb{Z}[\sqrt{-3}]$ to be integrally closed allowed the creation of this ``bad factorization." In the proof of the main theorem in \cite{C2} it is shown that if $R$ is not integrally closed, a simliar effect occurs.

It is known (see \cite{Za}) that if $R$ is a Krull domain, then $R[x]$ is an HFD if and only if $\vert Cl(R)\vert\leq 2$. It is also known from \cite{Ca} that if $R$ is a ring of algebraic integers (and hence, certainly a Krull domain), then $R$ is an HFD 
if and only if $\vert Cl(R)\vert\leq 2$. Hence if $R$ is a ring of algebraic integers with $\vert Cl(R)\vert\leq 2$, then $R$ is an HFD and so is $R[x]$. In this case $\rho(R)=\rho(R[x])$, but the equality can be delicate as we will demonstrate in the following example. The following example should be constrasted with the previous as it is integrally closed.

\begin{ex}
\label{integrally closed}
The integral domain $R:=\mathbb{Z}[\sqrt{-5}]$ is a ring of integers of class number precisely 2 (see, for example, the tables in \cite{Co}) and hence is an HFD that does not have unique factorization. But although $\rho(R[x])=1$, the factorizations can be exotic. The elements $2x^2+2x+3$, $2$, $2x+1+\sqrt{-5}$, and $2x+1-\sqrt{-5}$ are all elements of $\text{Irr}(R[x])$. Consider the factorizations

\[
(2)(2x^2+2x+3)=(2x+1+\sqrt{-5})(2x+1-\sqrt{-5}).
\]

The upshot is that even in this relatively ``nice" domain, the factorizations of elements can depend on how the polynomials break down (with respect to degree) in a nontrivial way. 
\end{ex}

It is well-known that if $R$ is a UFD with quotient field $K$, then any irreducible polynomial over $R[x]$ remains irreducible over $K[x]$. More generally, domains, $R$, for which every irreducible polynomial of degree at least one remains irreducible in $K[x]$ would seem to be the basic case to solve. For these domains, it would seem likely that there is a more direct correlation between $\rho(R)$ and $\rho(R[x])$, since there must be a one to one correspondence between irreducible factors of degree at least 1 for any two irreducible factorizations of the same element. Certainly, bad factorizations of the ilk of the previous two examples are precluded (although we feel obligated to point out again that the second example is an HFD).

Although it may seem reasonable to consider domains where irreducibles of degree at least one in $R[x]$ remain irreducible in $K[x]$, it is not obvious that in this case $\rho(R)=\rho(R[x])$. To illustrate the problem, consider the irreducible factorizations

\[
\pi_1\pi_2\cdots\pi_kf_1(x)f_2(x)\cdots f_m(x)=\xi_1\xi_2\cdots\xi_tg_1(x)g_2(x)\cdots g_n(x)
\]

\noindent with each $\pi_i, \xi_j\in\text{Irr}(R)$ and $f_i(x), g_j(x)$ all irreducible of degree at least one. 

Even if we have that $m=n$ and each $f_i(x)$ and $g_j(x)$ pair off (up to units in $K$), there is no guarantee that the ratio of $k$ and $t$ are within the elasticity bounds of $R$ (precisely because there is ambiguity up to units in $K$).

That being said, we present the following theorem. The rest of the paper will be devoted to establishing this result. The theorem will follow quickly from our classification of domains with this property.

\begin{thm}
\label{semi}
Let $R$ be a domain such that every irreducible of $R[x]$ of degree greater than or equal to 1 is irreducible in $K[x]$. Then if $\rho(R)$ is defined, then $\rho(R)=\rho(R[x])$.
\end{thm}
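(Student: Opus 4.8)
The plan is to compare an arbitrary pair of irreducible factorizations of a single element $h(x) \in \mathrm{A}(R[x])$ and show that the length ratio is controlled by $\rho(R)$. Write
\[
h(x) = \pi_1 \cdots \pi_k f_1(x) \cdots f_m(x) = \xi_1 \cdots \xi_t g_1(x) \cdots g_n(x),
\]
where the $\pi_i, \xi_j \in \mathrm{Irr}(R)$ are the constant atoms and the $f_i(x), g_j(x)$ are the irreducible factors of degree at least one. By hypothesis every $f_i$ and $g_j$ is irreducible in $K[x]$, so over the UFD $K[x]$ both sides give the \emph{same} factorization into $K[x]$-irreducibles up to units of $K$; hence $m = n$ and, after reordering, $g_i(x) = c_i f_i(x)$ for some $c_i \in K^{\times}$. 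First I would record this pairing, since it is the whole point of the standing hypothesis.

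Next I would isolate the constant discrepancy. Setting $c = c_1 c_2 \cdots c_m \in K^{\times}$ and cancelling the paired polynomial factors over $K[x]$, the two constant products must agree up to this unit of $K$:
\[
\pi_1 \cdots \pi_k = c\, \xi_1 \cdots \xi_t.
\]
This is the subtlety flagged in the paragraph before the theorem: the constants need not be equal in $R$, only in $K$. The key step is therefore to argue that $c$ is in fact a unit of $R$ (equivalently, that the two constant products are associates in $R$). The cleanest route I see is to show that each $f_i$ can be normalized so that its coefficients generate the unit ideal in a suitable sense, forcing the scaling constants $c_i$ to be units of $R$; this is exactly where one must understand \emph{which} domains satisfy the theorem's hypothesis. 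Since the excerpt states that the theorem ``will follow quickly from our classification of domains with this property,'' I expect the intended argument to invoke that structural classification to conclude $c \in U(R)$, rather than to prove it by a direct coefficient manipulation.

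Once $c \in U(R)$ is established, the constant products $\pi_1 \cdots \pi_k$ and $\xi_1 \cdots \xi_t$ become two genuine irreducible factorizations of associate elements of $R$, and hence of a single element of $\mathrm{A}(R)$. Therefore $k/t$ and $t/k$ are both bounded by $\rho(R)$. Now I would translate this into a bound on the full polynomial factorization lengths. The two factorizations of $h(x)$ have lengths $k + m$ and $t + m$ (using $m = n$), so their ratio is $(k+m)/(t+m)$, which lies between $\min(k,t)/\max(k,t)$-type bounds governed by $\rho(R)$; in particular, adding the \emph{same} number $m$ of factors to numerator and denominator can only pull the ratio closer to $1$, so $(k+m)/(t+m) \le \max(k/t, t/k) \le \rho(R)$. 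Taking the supremum over all elements and all pairs of factorizations gives $\rho(R[x]) \le \rho(R)$, and since the reverse inequality $\rho(R[x]) \ge \rho(R)$ was already observed in the introduction (constants factor only into constants, with $R$-atoms remaining $R[x]$-atoms), I conclude $\rho(R) = \rho(R[x])$.

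The main obstacle is the middle step, showing $c \in U(R)$. Everything else is bookkeeping, but the equality can genuinely fail when the scaling constants are non-units of $R$ — this is precisely the phenomenon illustrated by Example \ref{integrally closed}, where the polynomial factors pair off over $K$ yet the constant parts do not match in $R$. So the heart of the proof must be to rule this out using the hypothesis, presumably via the announced classification, and I would expect the authors to develop that classification first and then derive the theorem as an immediate corollary.
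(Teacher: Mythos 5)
Your bookkeeping is sound, but the proposal has a genuine gap exactly where you flag it: the claim $c \in U(R)$ is never proved, and that claim is the entire content of the theorem. Everything before it (pairing the nonconstant factors over the UFD $K[x]$, so that $m=n$ and $g_i = c_i f_i$ with $c_i \in K^{\times}$) and everything after it (the inequality $\frac{k+m}{t+m} \le \max(k/t,\,t/k) \le \rho(R)$, plus the easy inequality $\rho(R[x]) \ge \rho(R)$) is routine; the hypothesis on $R$ enters only through that one step, and deferring it to an unnamed ``classification'' you have not stated leaves the argument empty. For what it is worth, the gap is fillable along the lines you suggest, but it requires a real lemma, namely the first proposition of the paper's last section: the hypothesis forces $R$ to be integrally closed and to have the PSP-property (every primitive polynomial is superprimitive). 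Granting PSP, your normalization idea works: an irreducible nonconstant $f_i \in R[x]$ can have no nonunit constant divisor, hence is primitive; from $g_i = c_i f_i \in R[x]$ one gets $c_i \in (A_{f_i})^{-1}$, and from $f_i = c_i^{-1} g_i \in R[x]$ one gets $c_i^{-1} \in (A_{g_i})^{-1}$, so superprimitivity of $f_i$ and $g_i$ yields $c_i \in R$ and $c_i^{-1} \in R$, i.e.\ $c_i \in U(R)$, whence $c = \prod_i c_i \in U(R)$. Without proving something of this strength, nothing has been shown.

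It is also worth noting that the paper's actual route is different and gives a stronger conclusion that makes your ratio argument unnecessary. Its classification shows that the hypothesis is \emph{equivalent} to $R[x]$ being an AP-domain (every atom is prime); hence $R$ is an AP-domain as well, and since $\rho(R)$ being defined means $R$ has an atom, Lemma \ref{AP} gives $\rho(R)=\rho(R[x])=1$ outright: any two irreducible factorizations are prime factorizations and so have equal length. In other words, had you invoked the classification as you intended, you would not merely conclude $c \in U(R)$; the pairing-and-cancelling apparatus would collapse entirely, because under the hypothesis there are no length discrepancies to control in the first place.
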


As noted before, these conditions are not necessary as Example \ref{integrally closed} shows.

\section{Irreducibles of $R[x]$ versus irreducibles of $K[x]$}

In this section, we use the following facts and definitions many times without further mention.

\begin{itemize}
\item[a)] If $F$ is a nonzero fractional ideal of $R$, then $F_v=(F^{-1})^{-1}$.
\item[b)] The elements $a_1,a_2, \cdots ,a_n \in R$ have a greatest common divisor provided that $(a_1,a_2, \cdots ,a_n)_v$ is principal.
\item[c)] If $f \in K[x]$, the ideal generated by the coefficients of $f$ is denoted $A_f$.
\item[d)] The greatest common divisor of $a_1,a_2, \cdots ,a_n \in R$ will be denoted $[a_1,a_2, \cdots ,a_n]$ if it exists.
\end{itemize}

Our goal in this last section is to characterize those domains $R$ having the following property:

\begin{itemize}
\item[(P)] every nonconstant irreducible polynomial $f \in R[x]$ is irreducible in $K[x]$
\end{itemize}

The techniques used in the proof we shall give strongly resemble those used in the classical proof that every polynomial ring $R[x]$ over a UFD is again a UFD.  The proof of this classical result boils down to showing that any UFD, $R$, satisfies property (P) above.  Gauss's Lemma, which states that the product of two primitive polynomials is primitive, is the key which allows this proof to go through in the UFD case.  

Conditions under which the product of two primitive polynomials remains primitive has been studied in more general domains (see for instance, \cite{T}).  It turns out that the domains satisfying property (P) must satisfy a condition somewhat stronger than Gauss's Lemma; they must satisfy what is called the \emph{PSP-property}.  

\begin{defn}
A domain $R$ has the PSP-property if whenever $a_0+a_1x+\cdots+a_nx^n$ is a primitive polynomial over $R$ and $z \in (a_0,a_1,\cdots,a_n)^{-1}$, then $z \in R$.
\end{defn}

Polynomials $a_0+a_1x+\cdots+a_nx^n$ satisfying the above definition are called \emph{superprimitive}. Thus a domain has the PSP-property if every primitive polynomial is superprimitive.  

For integral domains, the following implications are well-known

\[
\text{UFD}\Longrightarrow\text{GCD}\Longrightarrow\text{PSP-property}\Longrightarrow\text{GL-property}\Longrightarrow\text{AP-property}.
\]  

\noindent and in \cite{AQ} it is shown that all of these types are equivalent for atomic domains.

Arnold and Sheldon \cite[Example 2.5]{AS} gave an example of a domain satisfying Gauss's Lemma (such domains are said to have the \emph{GL-property}), but failing to have the PSP-property.  The domain they considered was the domain $F[\{ x^\alpha : \alpha \geqslant 0 \},\{ y^\alpha : \alpha \geqslant 0 \}, \{ z^\alpha x^\beta : \alpha, \beta > 0 \}, \{ z^\alpha y^\beta : \alpha, \beta > 0 \}]$. Here, all exponents $\alpha$ and $\beta$ are understood to be taken from the field $\mathbb{Q}$ of rational numbers. This is an example of a monoid domain, and can intuitively be thought of as the ring of all formal polynomials in the given "indeterminates" with coefficients in $F$, the field of two elements.  We note that $yt+x$ is a primitive polynomial in $t$ that fails to be superprimitive, as $z \in (x,y)^{-1}$.  This leads us to the following theorem.

\begin{prop}
Assume every nonconstant irreducible $f \in R[x]$ is irreducible in $K[x]$.  Then $R$ is integrally closed and has the PSP-property.
\end{prop}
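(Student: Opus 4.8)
The plan is to prove the two conclusions separately, establishing integral closure first and then using it to help with the PSP-property.

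For integral closure I would argue by contradiction. Suppose $\alpha \in K \setminus R$ is integral over $R$, and let $g(x) = x^m + c_{m-1}x^{m-1} + \cdots + c_0 \in R[x]$ be a monic polynomial of least degree $m$ with $g(\alpha) = 0$. Since $\alpha \notin R$ we must have $m \geq 2$. The minimality of $m$ forces $g$ to be irreducible in $R[x]$: because $g$ is monic, any factorization $g = g_1 g_2$ into nonunits of $R[x]$ must have both $g_i$ of positive degree, and after normalizing them to be monic one of them would be a monic polynomial of degree $< m$ vanishing at $\alpha$, contradicting minimality. On the other hand, since $\alpha \in K$ we may write $g(x) = (x - \alpha)q(x)$ with $q \in K[x]$ monic of degree $m - 1 \geq 1$, so $g$ is reducible in $K[x]$. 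Thus $g$ is a nonconstant irreducible of $R[x]$ that is reducible in $K[x]$, contradicting our hypothesis; hence $R$ is integrally closed.

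For the PSP-property, let $f = a_0 + a_1 x + \cdots + a_n x^n$ be primitive, let $z \in (a_0, \ldots, a_n)^{-1}$, and suppose for contradiction that $z \notin R$. The key construction is the polynomial $P(x) := (x - z)f(x)$. Since $z a_i \in R$ for every $i$, a direct check on coefficients shows $P \in R[x]$; moreover $f$ is nonconstant (if $n = 0$ then primitivity makes $a_0$ a unit, and $z a_0 \in R$ already yields $z \in R$), so the factorization $P = (x - z)f$ exhibits $P$ as a nonconstant polynomial reducible in $K[x]$. By the contrapositive of the hypothesis, $P$ must then be reducible in $R[x]$, say $P = AB$ with $A, B$ nonunits; the factor vanishing at $z$, say $A$, gives $A = (x - z)A_1$ and $f = A_1 B$ over $K$, with $B \in R[x]$. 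The decisive case is when $A_1$ is a constant $\lambda$: then $A = \lambda x - \lambda z \in R[x]$ gives $\lambda, \lambda z \in R$, while $f = \lambda B$ gives $(a_0, \ldots, a_n) \subseteq (\lambda)$, so primitivity forces $\lambda \in U(R)$ and hence $z = (\lambda z)/\lambda \in R$.

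It remains to reduce the general case to this one, and this is where I expect the real difficulty. When $\deg B$ is strictly between $0$ and $n$, the identity $f = A_1 B$ makes $f$ reducible over $K$, so by hypothesis $f$ itself factors in $R[x]$, and by primitivity into two primitive factors of positive degree; the natural move is then to induct on $\deg f$, peeling $(x-z)$ into a lower-degree cofactor and reapplying the construction. The main obstacle is that $R$ is assumed neither atomic nor a GCD-domain, so I cannot freely extract a well-defined content (a ``primitive part'') from the $K[x]$-cofactor $A_1$, nor assume that repeatedly splitting off constant factors terminates. To control this I would work throughout with the content ideals $A_f$ and the $v$-operation recalled above, and exploit integral closure in the form that the multiplier ring $(A_f : A_f)$ equals $R$: since $A_f$ is a nonzero finitely generated $R$-module, this ring is integral over $R$, hence collapses into $R$ by the first part. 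The goal then becomes upgrading the hypothesis $z A_f \subseteq R$ to $z A_f \subseteq A_f$, which places $z$ in this multiplier ring and therefore in $R$. Managing the denominators and contents so that the inductive descent remains valid in this generality is the crux of the argument.
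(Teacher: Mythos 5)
Your integral-closure argument is correct and is essentially the paper's own: the minimal-degree monic polynomial vanishing at an integral element $\alpha \in K\setminus R$ is irreducible in $R[x]$ but splits off $(x-\alpha)$ over $K$. Your polynomial $P=(x-z)f$ is also precisely the paper's starting construction for the PSP half, and your ``decisive case'' (constant cofactor $\lambda$) is handled correctly. But the proof has a genuine gap exactly where you locate the crux, and neither of the two routes you sketch closes it. The induction you propose --- from $f=A_1B$ over $K$, use the hypothesis to factor $f=f_1f_2$ into primitives in $R[x]$ and recurse --- breaks down because the witness $z$ does not pass to the factors: $zA_f\subseteq R$ gives no control whatsoever over $zA_{f_1}$ or $zA_{f_2}$, so neither factor need fail superprimitivity and the inductive hypothesis has nothing to bite on. The $(A_f:A_f)$ idea is circular rather than gappy: upgrading $zA_f\subseteq R$ to $zA_f\subseteq A_f$ is a restatement of the desired conclusion $z\in R$, and you supply no mechanism for it.

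What actually closes the argument (and is the paper's proof) requires two ingredients you are missing. First, take $f$ of minimal degree among \emph{all} primitive non-superprimitive polynomials, and aim to show that $P=(x-z)f$ is \emph{irreducible} in $R[x]$, which contradicts the hypothesis since $P$ visibly factors into nonconstant polynomials in $K[x]$. Second, given a factorization $P=AB$ into nonunits of $R[x]$ with $(x-z)\mid A$ in $K[x]$, write $A=(x-z)A_1$; because $R$ is integrally closed and $x-z$ is monic, $A_1\in R[x]$ (\cite[Theorem 10.4]{G}) --- this, not the multiplier ring, is the form in which integral closure must enter. This pullback is exactly what transfers the witness: writing $A_1=a_kx^k+\cdots+a_0$ with $a_i\in R$, the coefficients of $A\in R[x]$ are $a_{i-1}-za_i$, so $za_i\in R$ for all $i$; and cancelling $x-z$ from $P=(x-z)A_1B$ gives $f=A_1B$, so $A_1$ is a factor of the primitive polynomial $f$, hence primitive. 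Thus $A_1$ is a primitive non-superprimitive polynomial with the same witness $z$. If $\deg B=0$ then $B$ is a nonunit constant dividing every coefficient of $f$, contradicting primitivity; if $\deg A_1=0$ then $A_1$ is a unit and $z\in R$; otherwise $1\leq\deg A_1<\deg f$, contradicting minimality. So $P$ is irreducible in $R[x]$, the desired contradiction. Your version, which applies the hypothesis to $P$ to get reducibility and then tries to descend through $f$ itself, cannot be repaired without these two moves.
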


\begin{proof}
Assume $R$ is not integrally closed.  Choose an element $\omega \in K$ that satisfies a monic irreducible polynomial $f \in R[x]$ of degree $\geq 2$.  Since $\omega$ is a root of $f$, the division algorithm in $K[x]$ implies that $f=(x-\omega)g$, where $g$ is a polynomial in $K[x]$ of degree $\geq 1$.  Hence $f$ is irreducible over $R$ but reducible over $K$. This is a contradiction.

Next, we assume $R$ does not have the PSP-property.  Let $y_0+y_1x+\cdots+y_nx^n$ be a primitive polynomial and let $z \in K$ be such that $z \in (y_0,y_1, \cdots, y_n)^{-1}$ but $z \notin R$.  In the collection of all primitive polynomials that are not superprimitive, we assume that $I:=y_0+y_1x+\cdots+y_nx^n$ is one of minimal degree.  In $K[x]$ we have the factorization $y_nx^{n+1}+(y_{n-1}+zy_n)x^n+\cdots+(y_0+zy_1)x+zy_0=(x+z)(y_nx^n+y_{n-1}x^{n-1}+\cdots+ y_1x+y_0)$ where the polynomial $f$ on the left hand side belongs to $R[x]$.  We claim that $f$ is irreducible over $R$.  If $f=gh$ for some $g,h \in R[x]$ then $x+z$ divides $g$ or $h$ in $K[x]$, say $g=(x+z)p(x)$. Since $R$ is integrally closed, $p(x) \in R[x]$ (see \cite[Theorem 10.4]{G}), say   $p(x)=a_kx^k+a_{k-1}x^{k-1}+ \cdots +a_1x+a_0$. Then $g(x)=a_kx^{k+1}+(a_{k-1}+za_k)x^k+\cdots+(a_0+za_1)x+za_0$, so that $za_0, za_1,\cdots,za_k \in R$, and hence $p(x)$ is not superprimitive.  But $p(x)$ is a factor of the primitive polynomial $I$.  Hence $p(x)$ is primitive, so that the minimality assumption on $I$ implies that $k=n$.  It follows that $h$ is a unit so that $f$ is irreducible over $R$, but not over $K$, the desired contradiction. 
\end{proof}

Thus in our search for the domains satisfying property (P), we may restrict our attention to integrally closed domains having the PSP-property.  

To show that a particular domain actually has property (P), one would probably employ the following strategy: Suppose $f \in R[x]$ is a nonconstant polynomial that is irreducible over $R$, but fails to be irreducible over the field of fractions $K$, say $f=gh$ in $K[x]$.  Now ``clear the denominators," that is, choose nonzero $b,d \in R$ such that $bdf=(bg)(dh)$ and $bg,dh \in R[x]$.  At this point in the proof, we would probably need to find some way to cancel out $b$ and $d$ to get a contradiction, namely, that $f=g^\prime h^\prime$ for some $g^\prime, h^\prime \in R[x]$.  It turns out that if $R$ has the PSP-property, then we can assert, after clearing denominators, that the greatest common divisor of the coefficients of $bdf$ exists and is equal to $bd$, as shown by the following.

\begin{prop}
\label{gcd}
Let $R$ be a domain. The following are equivalent.
\begin{itemize}
\item[a)] $R$ has the PSP-property.
\item[b)] Whenever the elements $a_1,a_2,\cdots,a_n \in R$ have a greatest common divisor and $0\neq b \in R$, then  $[ba_1,ba_2,\cdots,ba_n]=b[a_1,a_2,\cdots,a_n]$.  
\end{itemize}
\end{prop}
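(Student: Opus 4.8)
The plan is to prove both implications of the equivalence, using as the central device a dictionary that converts divisibility statements about the scaled tuple $ba_1, \ldots, ba_n$ into membership statements in an inverse ideal, thereby linking the classical notion of greatest common divisor to the superprimitive condition. Throughout, $[a_1, \ldots, a_n] = d$ will mean that $d$ divides each $a_i$ and that every common divisor of the $a_i$ divides $d$. The guiding observation is that, after writing $a_i = da_i'$ with $d = [a_1, \ldots, a_n]$, a common divisor $c$ of the elements $ba_i$ is exactly an element for which the quotient $bd/c$ sends each primitive cofactor $a_i'$ into $R$, i.e. for which $bd/c$ lies in $(a_1', \ldots, a_n')^{-1}$; this is what drives everything.

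For (a) $\Rightarrow$ (b), I would first factor out the greatest common divisor. Assuming $d = [a_1, \ldots, a_n]$ exists, write $a_i = da_i'$ and check the routine fact that $[a_1', \ldots, a_n'] = 1$, so that $a_1' + a_2' x + \cdots$ is a \emph{primitive} polynomial. Given any common divisor $c$ of $ba_1, \ldots, ba_n$, set $z = bd/c$; then $z a_i' = ba_i/c \in R$ for every $i$, so $z \in (a_1', \ldots, a_n')^{-1}$. Since the cofactor polynomial is primitive and $R$ has the PSP-property, it is superprimitive, forcing $z \in R$, i.e. $c \mid bd$. As $bd$ is itself visibly a common divisor of the $ba_i$, this shows $bd = [ba_1, \ldots, ba_n]$, which equals $b[a_1, \ldots, a_n]$.

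For (b) $\Rightarrow$ (a), I would run the same dictionary in reverse to recover the PSP-property directly from its definition. Let $a_0 + a_1 x + \cdots + a_n x^n$ be primitive, so $[a_0, \ldots, a_n] = 1$, and let $z \in (a_0, \ldots, a_n)^{-1}$; writing $z = b/c$ with $b, c \in R$ and $c \neq 0$, the condition $za_i \in R$ says precisely that $c \mid ba_i$ for every $i$. By hypothesis (b), $[ba_0, \ldots, ba_n] = b[a_0, \ldots, a_n] = b$, so the common divisor $c$ of the $ba_i$ must divide $b$; hence $z = b/c \in R$. This is exactly superprimitivity, so every primitive polynomial is superprimitive and $R$ has the PSP-property.

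The step I expect to be the main obstacle is the reduction to the primitive case in (a) $\Rightarrow$ (b): one must extract $d = [a_1, \ldots, a_n]$, confirm that the cofactors $a_i'$ really are coprime so that PSP is applicable, and -- most delicately -- verify that the inverse-ideal membership $z \in (a_1', \ldots, a_n')^{-1}$ captures \emph{every} common divisor of the scaled tuple, not merely the evident ones. Establishing that $[ba_1, \ldots, ba_n]$ exists at all, rather than merely assuming it, is part of this step, and it is precisely here that the full strength of PSP (as opposed to a weaker Gauss-type condition) is used.
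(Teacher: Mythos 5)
Your proof is correct and takes essentially the same route as the paper's: in (a) $\Rightarrow$ (b) you divide out the gcd, observe the cofactor tuple is primitive, and apply the PSP-property to $z = bd/c \in (a_1',\ldots,a_n')^{-1}$ to conclude $c \mid bd$; in (b) $\Rightarrow$ (a) you write $z = b/c$ and use $[ba_0,\ldots,ba_n] = b$ to get $c \mid b$, exactly as in the paper. The only difference is presentational: you spell out the routine facts (that the cofactors $a_i'$ have gcd $1$, and that existence of $[ba_1,\ldots,ba_n]$ is part of what must be proved) which the paper uses implicitly.
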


\begin{proof}
Assume $R$ has the PSP-property and $[a_1,a_2,\cdots,a_n]=g$.  Given $b \in R$, it is clear that $bg$ is a common divisor of $ba_1,ba_2,\cdots,ba_n$. If $x$ is some other common divisor, then $\frac{bg}{x} \in (\frac{a_1}{g},\frac{a_2}{g},\cdots,\frac{a_n}{g})^{-1}$. This implies  $\frac{bg}{x} \in R$ since $R$ is PSP. In other words, $x|bg$ so that $[ba_1,ba_2,\cdots,ba_n]=bg$

Conversely, assume b) holds and $\frac{r}{s} \in (a_1,a_2,\cdots,a_n)^{-1}$, where $a_0+a_1x+\cdots+a_nx^n$ is some primitive polynomial over $R$.  Then $s|ra_i$ for all $i$, so b) implies that $s|r$. Thus $\frac{r}{s} \in R$, so that $R$ is PSP. 
\end{proof}

\begin{prop}
Let $R$ be an integrally closed PSP-domain. The following are equivalent.
\begin{itemize}
\item[a)] Every nonconstant irreducible polynomial $f \in R[x]$ is irreducible in $K[x]$
\item[b)] Whenever $f=gh$ in $R[x]$ and the greatest common divisor of the coefficients of $f$ exists, then the greastest common divisor of the coefficients of $g$ exists
\end{itemize}
\end{prop}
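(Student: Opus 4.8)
The plan is to prove the two implications separately, using throughout the elementary observation that a nonconstant atom of $R[x]$ is automatically primitive: if a nonconstant $f$ had a nonunit common divisor $c$ of its coefficients, then $f=c\,(f/c)$ would be a proper factorization in $R[x]$. The recurring subtlety — and the reason the PSP-property (and the well-behaved $v$-operation) is needed — is that in a general PSP-domain the greatest common divisor of the coefficients of a polynomial need not exist, so one cannot blithely factor a polynomial into ``primitive irreducibles.'' Controlling this is the crux of both directions.

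For (b) $\Rightarrow$ (a), I would let $f$ be a nonconstant irreducible of $R[x]$ and suppose, for contradiction, that $f=GH$ in $K[x]$ with $\deg G,\deg H\geq 1$. Clearing denominators, choose nonzero $b,d\in R$ with $g:=bG$ and $h:=dH$ in $R[x]$, so $bdf=gh$. Since $f$ is a nonconstant atom it is primitive, so its coefficients have gcd a unit, and Proposition~\ref{gcd} gives that the coefficients of $bdf$ have gcd $bd$; in particular this gcd exists. Applying hypothesis (b) to $bdf=g\cdot h$ and to $bdf=h\cdot g$ shows that the coefficients of $g$ and of $h$ each have a gcd, so I may write $g=c\,g_1$ and $h=c'\,h_1$ with $g_1,h_1$ primitive of degree $\geq 1$. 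The GL-property (which follows from PSP) makes $g_1h_1$ primitive, so by Proposition~\ref{gcd} the coefficients of $bdf=cc'\,g_1h_1$ have gcd $cc'$. Comparing the two computations gives $bd=u\,cc'$ for some $u\in U(R)$, whence $f=u^{-1}g_1h_1$ is a product of two nonconstant polynomials in $R[x]$, contradicting irreducibility.

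For (a) $\Rightarrow$ (b), suppose $f=gh$ in $R[x]$ and the coefficients of $f$ have a gcd $c_f$; I may assume $f$ nonconstant and write $f=c_f F$ with $F$ primitive. The key step is to factor $F$, and this is exactly where the obstacle about nonexistent gcds is sidestepped: I claim $F=P_1\cdots P_n$ with each $P_i$ primitive and irreducible in $R[x]$. This follows by induction on $\deg F$, the decisive point being that a primitive polynomial cannot have a nonunit constant factor, so any nontrivial $R[x]$-factorization of $F$ splits it into two polynomials of \emph{strictly smaller} degree, each again primitive; hence the induction terminates on degree alone, with no appeal to ACCP. Now I invoke (a): each $P_i$ is a nonconstant atom, hence irreducible in $K[x]$, so $P_1\cdots P_n$ is the factorization of $F$ into $K$-irreducibles. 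Since $g\mid f\sim F$ in the UFD $K[x]$, I get $g=\beta\prod_{i\in S}P_i$ for some subset $S$ and some $\beta\in K$. The product $\prod_{i\in S}P_i$ is primitive, hence superprimitive by the PSP-property, and $\beta\bigl(\prod_{i\in S}P_i\bigr)\in R[x]$ forces $\beta\in R$; thus the coefficients of $g$ have gcd $\beta$ by Proposition~\ref{gcd}.

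I expect the main obstacle to be precisely the possible nonexistence of gcds of coefficients, which defeats the naive ``factor into primitive parts'' approach and is exactly why one passes to the primitive part $F$ and runs the degree-induction there; the guaranteed degree drop is what keeps the argument finite. A secondary point needing care is the final descent, where superprimitivity (the PSP-property) is used to force the leftover $K$-scalar $\beta$ back into $R$.
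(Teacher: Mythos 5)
Your proof is correct and follows essentially the same route as the paper's: both directions clear denominators, compute the gcd of the coefficients of $bdf$ in two ways via Proposition~\ref{gcd} and the GL-property, and in the converse direction factor the primitive part into primitive irreducibles, pass to $K[x]$ by unique factorization, and use superprimitivity to pull the scalar $\beta$ (the paper's $u$) back into $R$. The only difference is cosmetic: you spell out the degree induction showing a primitive polynomial is a product of irreducibles, a step the paper asserts without proof.
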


\begin{proof}

b) $\Longrightarrow$ a).  Assume b) holds.  Let $f \in R[x]$ be a nonconstant irreducible polynomial (hence the greatest common divisor of the coefficients is 1).  Suppose $f=gh$, where $g,h \in K[x]$ have degrees $\geqslant 1$.  Choose nonzero $b,d \in R$ such that $bg,dh \in R[x]$.  Then we have the equation $bdf=(bg)(dh)$ in $R[x]$, and since $R$ is PSP, Proposition \ref{gcd} implies that the greatest common divisor of the coefficients of $bdf$ exists and is equal to $bd$.  Hence the greatest common divisor of the coefficients of $bg$ exists, say $u$, and the greatest common divisor of the coefficients of $dh$ exists, say $v$.  Note that $uv$ divides the coefficients of $bdf$.  Hence $\frac{bd}{uv} f=g_1 h_1$, where $g_1,h_1$ are primitive.  Since $R$ has the GL-property, $\frac{bd}{uv} f$ is primitive, so $\frac{bd}{uv}$ is a unit.  But then $f$ is reducible over $R$, a contradiction.  

a) $\Longrightarrow$ b). Assume a) holds.  Suppose $f=gh$ in $R[x]$ and the greatest common divisor of the coefficients of $f$ exists, say $s$.  We can assume $\deg f \geqslant 1$. Then $f=sf^{\prime}$, where $f^{\prime}$ is primitive.  Since $f^{\prime}$ is primitive, $f^{\prime}$ is the product of irreducibles, say $f^{\prime}=f_1f_2 \cdots f_k$.  By unique factorization in $K[x]$, $g=uf_1f_2 \cdots f_r$ for some $r \leqslant k$ (without loss of generality) and some $u \in K$. Since $R$ is PSP, $u \in R$ and the greatest common divisor of the coefficients of $g$ equals $u$. 
\end{proof}

In the paper \cite{AS}, Arnold and Sheldon proved the following theorem.

\begin{thm}
Let $R$ be a domain with quotient field $K$. The following are equivalent.
\begin{itemize}
\item[1)] $R[x]$ is an AP-Domain
\item[2)] $R[x]$ is a GL-Domain
\item[3)] Each of the following holds:
\begin{itemize}
\item[($\alpha$)] $R$ has the PSP-property
\item[($\beta$)] $R$ is integrally closed, and
\item[($\gamma$)] Whenever $B,C$ are finitely generated fractional ideals of $R$ such that $(BC)_v=R$, then $B_v$ is principal
\end{itemize}
\end{itemize}
\end{thm}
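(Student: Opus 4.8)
The plan is to close the cycle $2)\Rightarrow 1)\Rightarrow 3)\Rightarrow 2)$. The implication $2)\Rightarrow 1)$ costs nothing: it is the instance for the ring $R[x]$ of the general implication $\mathrm{GL}\text{-}\mathrm{property}\Rightarrow\mathrm{AP}\text{-}\mathrm{property}$ recorded in the displayed chain above. Thus everything reduces to showing $1)\Rightarrow 3)$ and $3)\Rightarrow 2)$.

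For $1)\Rightarrow 3)$, I would first observe that $R[x]$ being an AP-domain forces property (P). Indeed, let $f\in R[x]$ be a nonconstant irreducible; since $R[x]$ is AP, $f$ is prime. If $f=gh$ in $K[x]$ with $\deg g,\deg h\geq 1$, clear denominators to write $cf=g'h'$ with $c\in R$ and $g',h'\in R[x]$; primality of $f$ gives $f\mid g'$ or $f\mid h'$ in $R[x]$, which is impossible for degree reasons since $\deg g'=\deg g<\deg f$ and similarly for $h'$. Hence $f$ is irreducible over $K$, so (P) holds, and an earlier proposition yields $(\beta)$ that $R$ is integrally closed and $(\alpha)$ that $R$ has the PSP-property. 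The delicate part is $(\gamma)$: given finitely generated $B=(a_0,\dots,a_m)$ and $C=(b_0,\dots,b_n)$ with $(BC)_v=R$, I would form $f=\sum a_ix^i$ and $g=\sum b_jx^j$, so that $A_f=B$ and $A_g=C$. The hypothesis $(BC)_v=R$, together with the content behaviour available once $R$ is integrally closed and PSP, should make $fg$ (essentially) primitive; factoring $fg$ into primes in the AP-domain $R[x]$ and tracking contents should then exhibit a generator of $B_v$, proving $B_v$ principal.

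For $3)\Rightarrow 2)$, I would establish Gauss's Lemma for $R[x]$ by passing to contents in $R$. Working with a polynomial in a second variable $y$ over $R[x]$, the PSP-property of $R$ lets me identify \emph{primitive} with \emph{superprimitive}, i.e.\ with the total $R$-content having $v$-closure equal to $R$. A Dedekind--Mertens style content formula, valid because $R$ is integrally closed, reduces the content of a product to the $v$-product of the contents, $(A_{FG})_v=(A_FA_G)_v$, so that primitivity of $F$ and $G$ gives $(A_{FG})_v=(R\cdot R)_v=R$. The remaining task is to rule out a common divisor of positive degree, which is handled by integral closure exactly as in the classical UFD argument, and to descend from the $R$-content statement back to primitivity over $R[x]$.

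I expect condition $(\gamma)$ to be the main obstacle in both nontrivial directions, and it is precisely the class-group type obstruction that distinguishes these domains from GCD-domains. In the direction $1)\Rightarrow 3)$ the difficulty is manufacturing, from the single hypothesis that atoms of $R[x]$ are prime, a genuine generator of $B_v$; the content bookkeeping must convert primality of a polynomial factor into principality of an ideal of $R$. In the direction $3)\Rightarrow 2)$ the same condition is what prevents the content computation from terminating at a finitely generated $v$-invertible but non-principal ideal, which would otherwise produce a nonconstant atom of $R[x]$ that is not prime and thereby destroy the GL-property. Making the correspondence between the $v$-theory of $R$ and the factorization theory of $R[x]$ precise --- in effect a comparison of the relevant class groups of $R$ and $R[x]$ --- is the technical heart of the argument.
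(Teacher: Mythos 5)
A preliminary remark: the paper does not prove this theorem at all --- it is quoted verbatim from Arnold and Sheldon \cite{AS} and used as a black box in assembling the paper's own main result --- so your attempt has to stand entirely on its own. Its skeleton is reasonable, and the easy parts are right: $2)\Rightarrow 1)$ is just the general implication $\mathrm{GL}\Rightarrow\mathrm{AP}$ applied to $R[x]$, and your degree argument showing that hypothesis 1) forces property (P) is correct, after which the paper's earlier proposition (proved independently of this theorem) yields $(\alpha)$ and $(\beta)$. But your route to $(\gamma)$ breaks down at the words ``factoring $fg$ into primes in the AP-domain $R[x]$'': an AP-domain need not be atomic, so a nonzero nonunit of $R[x]$ need not admit \emph{any} irreducible factor, and nothing in hypothesis 1) licenses starting such a factorization. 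This particular gap is repairable from inside the paper: once (P), $(\alpha)$, $(\beta)$ are in hand, the paper's proposition for integrally closed PSP-domains (its implication a) $\Rightarrow$ c)) delivers $(\gamma)$ by $v$-ideal computations that never factor anything into irreducibles. You should cite that instead of the prime-factorization argument.

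The direction $3)\Rightarrow 2)$ is the more serious gap, and it cannot be outsourced to the paper, since nothing in the paper derives the GL (or AP) property of $R[x]$ from conditions on $R$ --- that is exactly what \cite{AS} is cited for. Your sketch has two defects. First, the proposed dictionary ``primitive over $R[x]$ $\Leftrightarrow$ total $R$-content has $v$-closure $R$'' holds only in one direction: PSP of $R$ does show that a polynomial $F\in R[x][y]$ primitive over $R[x]$ has $v$-trivial total $R$-content, but the converse, which is the direction you need to conclude that $FG$ is primitive over $R[x]$, is false --- e.g.\ $(x^2+x+1)(1+y)$ has total $R$-content equal to $R$ yet is divisible by the nonunit $x^2+x+1$ of $R[x]$. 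Ruling out a common divisor of positive degree is therefore not ``handled by integral closure exactly as in the classical UFD argument''; in the classical argument one factors that divisor into primes, which is again unavailable here. Second, your outline never actually deploys condition $(\gamma)$, yet the implication $(\alpha)+(\beta)\Rightarrow$ GL is false in general (otherwise $(\gamma)$ would be redundant in the statement of the theorem), so any correct argument must use it at a specific step, not merely in a concluding remark. You concede in your final paragraph that this correspondence is ``the technical heart of the argument''; that is accurate, but it means what you have written is a plan whose hard steps --- precisely the content of Arnold and Sheldon's paper --- remain open.
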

  
Condition $(\gamma)$ clearly has a resemblance to condition $b)$ of the proposition we just proved.  In fact, we have the following theorem.

\begin{prop}
Let $R$ be an integrally closed PSP-domain. The following are equivalent.
\begin{itemize}
\item[a)] Every nonconstant irreducible polynomial $f \in R[x]$ is irreducible in $K[x]$
\item[b)] Whenever $f=gh$ in $R[x]$ and the greatest common divisor of the coefficients of $f$ exists, then the greatest common divisor of the coefficients of $g$ exists
\item[c)] Whenever $B,C$ are finitely generated fractional ideals of $R$ such that $(BC)_v=R$, then $B_v$ is principal
\end{itemize}
\end{prop}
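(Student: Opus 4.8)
The plan is to invoke the Arnold--Sheldon theorem quoted above to trade condition (c) for the statement that $R[x]$ is an AP-domain, and then to prove directly that this is equivalent to property (P). Since the previous proposition already establishes (a) $\Longleftrightarrow$ (b), it suffices to show (a) $\Longleftrightarrow$ (c). The key observation is that under our standing hypotheses---$R$ integrally closed with the PSP-property---conditions $(\alpha)$ and $(\beta)$ of the Arnold--Sheldon theorem are automatically satisfied. Consequently that theorem reduces, in the present setting, to the statement that condition (c) (which is precisely $(\gamma)$) holds if and only if $R[x]$ is an AP-domain. Thus my task becomes proving that (a) holds if and only if $R[x]$ is an AP-domain.

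For the implication ``$R[x]$ an AP-domain $\Longrightarrow$ (a)'', I would take a nonconstant irreducible $\phi \in R[x]$, which is then prime in $R[x]$, and argue by degrees. If $\phi$ factored as $gh$ with $g,h \in K[x]$ of degree at least one, I would clear denominators to obtain $bd\phi = (bg)(dh)$ in $R[x]$; then $\phi$ divides the product $(bg)(dh)$ but, since $\deg(bg) = \deg g < \deg \phi$ and likewise for $dh$, it can divide neither factor, contradicting primality. Hence $\phi$ is irreducible in $K[x]$, which is (a). This direction uses only primality and needs no content theory.

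The substantive direction is ``(a) $\Longrightarrow$ $R[x]$ an AP-domain'', where I must show every irreducible of $R[x]$ is prime. A constant irreducible $p$ is irreducible in $R$, and since the implication chain recorded earlier gives PSP $\Longrightarrow$ AP, $p$ is prime in $R$ and hence prime in $R[x]$ (because $R[x]/(p) \cong (R/pR)[x]$ is a domain). For a nonconstant irreducible $\phi$, I would first note that $\phi$ is primitive, so the PSP-property makes it superprimitive, i.e. $(A_\phi)_v = R$. By (a), $\phi$ is irreducible and therefore prime in $K[x]$; so if $\phi \mid AB$ in $R[x]$ I may assume $\phi \mid A$ in $K[x]$, say $A = \phi C$ with $C \in K[x]$.

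The main obstacle is exactly the promotion of this factorization from $K[x]$ to $R[x]$: showing $C \in R[x]$, which is what yields primality of $\phi$ in $R[x]$. My plan is to clear denominators, writing $dA = \phi C'$ with $C' \in R[x]$ and $0 \neq d \in R$, and then pass to content ideals. Using the content formula (Dedekind--Mertens) together with $(A_\phi)_v = R$, I expect to obtain $(A_{C'})_v = d\,(A_A)_v \subseteq (d)$, so that $d$ divides every coefficient of $C'$ and hence $C = C'/d \in R[x]$. This gives $\phi \mid A$ in $R[x]$, proving $\phi$ prime and $R[x]$ an AP-domain. A final application of the Arnold--Sheldon theorem then returns condition (c), closing the equivalence together with the previously established (a) $\Longleftrightarrow$ (b). I anticipate the delicate point to be the content-ideal manipulation, specifically justifying $(A_{C'})_v = d\,(A_A)_v$ from Dedekind--Mertens and the superprimitivity of $\phi$.
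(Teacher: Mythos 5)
Your proof is correct, but it takes a genuinely different route from the paper's. The paper never touches the AP-property at this stage: having already established a) $\Leftrightarrow$ b), it proves b) $\Leftrightarrow$ c) directly by $v$-ideal computations. For b) $\Rightarrow$ c) it encodes the generators of $B$ and $C$ as the coefficients of polynomials $g$ and $h$, clears denominators, uses $(A_{gh})_v=(A_gA_h)_v$ (Gilmer, Proposition 34.8, valid since $R$ is integrally closed) and $(bcA_{gh})_v=bc(A_{gh})_v$ to see that the gcd of the coefficients of $(bg)(ch)$ is $bc$, and then applies b) together with the Anderson--Quintero fact that a gcd of $a_1,\dots,a_n$ exists precisely when $(a_1,\dots,a_n)_v$ is principal; c) $\Rightarrow$ b) is a short computation of the same kind. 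You instead prove a) $\Leftrightarrow$ c): under the standing hypotheses, conditions $(\alpha)$ and $(\beta)$ of the Arnold--Sheldon theorem are automatic, so c) becomes equivalent to $R[x]$ being an AP-domain, and you then show a) $\Leftrightarrow$ ($R[x]$ is AP) --- one direction by a clean degree/primality argument, the other by showing that a nonconstant irreducible $\phi$ is superprimitive and promoting divisibility in $K[x]$ to divisibility in $R[x]$. Your flagged delicate step does go through: from $dA=\phi C'$ one gets $(A_{C'})_v=(A_\phi A_{C'})_v=(A_{\phi C'})_v=(A_{dA})_v=d(A_A)_v\subseteq (d)$, using the star-operation identity $(FG)_v=(F_vG)_v$ together with $(A_\phi)_v=R$ for the first equality and Gilmer's Proposition 34.8 (the same content formula the paper relies on) for the second; this promotion step is essentially Tang's lemma from the reference [T]. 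The trade-off is this: the paper's argument keeps the proposition self-contained, citing Arnold--Sheldon only afterward to assemble the final theorem, whereas your argument outsources the fractional-ideal condition to Arnold--Sheldon but, in exchange, directly establishes the equivalence of property (P) with $R[x]$ being an AP-domain --- the heart of the paper's main theorem --- rather than recovering it later, and it makes explicit the fact that nonconstant irreducibles of $R[x]$ are actually prime under these hypotheses.
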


\begin{proof}  We already proved the equivalence of a) and b).

b) $\Longrightarrow$ c).  Suppose $(BC)_v=R$.  Let $g$ be a polynomial whose coefficients are the generators of $B$ (if generators of $B$ are chosen in order $B=(b_0,b_1,\cdots,b_n)$ we will define, $g=b_0+b_1x+\cdots +b_nx^n$) and let $h$ be defined similarly with respect to chosen generators of $C$.  Choose nonzero $b,c \in R$ such that $bg,ch \in R[x]$.  Since $R$ is integrally closed, $(A_{gh})_v=(A_gA_h)_v$ \cite[Proposition 34.8]{G}, so that $(A_{gh})_v=R$. Since  $(bcA_{gh})_v=bc((A_{gh})_v)$ \cite[Proposition 32.1(1)]{G}, we therefore have $(bcA_{gh})_v=bcR$.  This implies that the greatest common divisor of the coefficients of $bcgh$ equals $bc$. Since $bcgh=(bg)(ch)$, it follows by assumption that the greatest common divisor of the coefficients of $bg$ exists, so that $(A_{bg})_v=(bB)_v$ is principal \cite[Theorem 3.3]{AQ}.  Hence $B_v$ is principal.

c) $\Longrightarrow$ b).  Suppose $f=gh$ in $R[x]$ and the greatest common divisor of the coefficients of $f$ exists, say $s$.  Let $f_1=\frac{f}{s}$ and $h_1=\frac{h}{s}$.  Then $(A_gA_{h_1})_v=(A_{gh_1})_v=(A_{f_1})_v$, so $(A_gA_{h_1})_v=R$.  Hence $(A_g)_v$ is principal.  Hence the greatest common divisor of the coefficients of $g$ exists.  
\end{proof}  
  
Putting together the results of this section we obtain our main result, which is the following.

\begin{thm}
Let $R$ be a domain with quotient field $K$. The following are equivalent.
\begin{itemize}
\item[1)] Every nonconstant irreducible polynomial $f \in R[x]$ is irreducible in $K[x]$
\item[2)] $R[x]$ is an AP-Domain
\item[3)] $R[x]$ is a GL-Domain
\item[4)] Each of the following holds:
\begin{itemize}
\item[($\alpha$)] $R$ has the PSP-property
\item[($\beta$)] $R$ is integrally closed, and
\item[($\gamma$)] Whenever $B,C$ are finitely generated fractional ideals of $R$ such that $(BC)_v=R$, then $B_v$ is principal
\end{itemize}
\end{itemize}
\end{thm}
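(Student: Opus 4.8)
The plan is to recognize that essentially all of the substantive work has already been carried out in the preceding propositions, so the proof is an assembly rather than a fresh argument. The theorem of Arnold and Sheldon quoted above already delivers the equivalence $(2) \Leftrightarrow (3) \Leftrightarrow (4)$, since condition $(4)$ is word-for-word their condition $3)$ with its three clauses $(\alpha)$, $(\beta)$, $(\gamma)$. Consequently the only genuine task is to splice condition $(1)$ into this chain, and I would do this by establishing the two implications $(1) \Rightarrow (4)$ and $(4) \Rightarrow (1)$.

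For $(1) \Rightarrow (4)$, I would first invoke the opening proposition of this section, which asserts that condition $(1)$ forces $R$ to be integrally closed and to possess the PSP-property; this hands us $(\beta)$ and $(\alpha)$ at once. Now that $R$ is known to be an integrally closed PSP-domain, the proposition immediately preceding this theorem — the one establishing the equivalence of a), b), c) — becomes applicable. Its hypothesis a) is exactly condition $(1)$ and its clause c) is exactly $(\gamma)$, so reading off the implication a) $\Rightarrow$ c) yields $(\gamma)$. With all three of $(\alpha)$, $(\beta)$, $(\gamma)$ secured, condition $(4)$ holds.

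For the converse $(4) \Rightarrow (1)$, condition $(4)$ already grants $(\alpha)$ and $(\beta)$ outright, so $R$ is again an integrally closed PSP-domain and the same a)-b)-c) equivalence applies. Here $(\gamma)$ is precisely clause c), and the implication c) $\Rightarrow$ a) returns condition $(1)$. Together the two implications give $(1) \Leftrightarrow (4)$, which joins with the Arnold--Sheldon equivalences to complete the circle $(1) \Leftrightarrow (4) \Leftrightarrow (3) \Leftrightarrow (2)$.

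The step I expect to demand the most care is not any computation but the bookkeeping of standing hypotheses. The a)-b)-c) equivalence is stated only under the blanket assumption that $R$ is an integrally closed PSP-domain, so one cannot use it to extract $(\gamma)$ directly from $(1)$ until integral closure and the PSP-property have first been established independently. That is exactly the function of the opening proposition, and the one real subtlety is to apply it \emph{before} appealing to the a)-b)-c) equivalence; once the hypotheses are aligned in the correct order, each implication is immediate.
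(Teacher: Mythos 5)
Your proposal is correct and is essentially the same as the paper's own (implicit) proof: the paper simply says ``putting together the results of this section,'' and the assembly it intends is exactly yours --- the Arnold--Sheldon theorem for $(2)\Leftrightarrow(3)\Leftrightarrow(4)$, the opening proposition to get $(\alpha)$ and $(\beta)$ from $(1)$, and then the a)--b)--c) equivalence (now applicable) to shuttle between $(1)$ and $(\gamma)$. Your observation about the order of application --- establishing integral closure and the PSP-property before invoking the a)--b)--c) proposition --- is precisely the bookkeeping the paper leaves to the reader.
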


We close with a few observations. First, it is of note that if $R[x]$  is atomic, then $R[x]$ is an AP-domain if and only if $R$ is a UFD. Also we note that our main theorem of the previous section has its resolution in this stronger result. Indeed, if  we have the hypothesis of Theorem \ref{semi}, then $R[x]$ is an AP-domain. Hence $R$ is an AP-domain. If $R$ has at least one atom then $\rho(R)=\rho(R[x])=1$.

Finally, if $R$ is a Pr\"{u}fer domain satisfying property (P) then since every finitely generated ideal is invertible we must have $B_v$ principal for each $B$. Hence $R$ is a GCD-domain. And if $[a_0,\cdots, a_n]=1$, then $(a_0,\cdots,a_n)^{-1}=R$.  Hence there exist $r_0,\cdots,r_n \in R$ such that $r_0a_0+\cdots+r_na_n=1$.  We conclude that the greatest common divisor of a finite set of elements is a linear combination of that set, and so $R$ is a Bezout domain.  Thus we obtain \cite[Theorem 28.8]{G}, which says (paraphrasing) that a Pr\"{u}fer domain $R$ has property (P) iff $R$ is Bezout.

\bibliographystyle{alpha}

\end{document}